\newtheorem*{nthm}{Greene's Theorem}
\newcommand{\set}[1]{\{#1\}}
\newcommand{\gr}[2]{\begin{center}
   \includegraphics[width=#1in]{#2}
\end{center}
}
\newcommand{\Z}{\mathbb Z}
\renewcommand{\L}{\mathscr{L}}
\newcommand{\A}{\mathscr{A}}
\newcommand{\B}{\mathscr{B}}
\newcommand{\I}{\mathscr{I}}
\newcommand{\al}[1]{\left\langle\left\langle #1\right\rangle\right\rangle}
\newcommand{\ring}{R\al{A^*}}
\title{An application of the Goulden-Jackson cluster theorem}
\author[\initial{I.} \middlename{M.} Gessel]{\firstname{Ira} \middlename{M.} \lastname{Gessel}}
\address{Department of Mathematics\\
   Brandeis University\\
   Waltham, MA 02453 (USA)}
\email{gessel@brandeis.edu}
\thanks{Supported by a grant from the Simons Foundation (\#427060, Ira Gessel).}
\keywords{Goulden-Jackson cluster theorem, forbidden subwords, M\"obius function}
\subjclass{05A05, 05A15}
\begin{document}

\begin{abstract}
Let $A$ be an alphabet and let 
$F$ be a set of words with letters in $A$. 
We show that the sum of all words with letters in $A$ with no consecutive subwords in $F$, 
as a formal power series in noncommuting variables,
is the reciprocal of a series with all coefficients 0, 1 or $-1$.
We also explain how this result is related to  the work of Dotsenko and Khoroshkin on a closely related problem and to a theorem of Curtis Greene on lattices with M\"obius function 0, 1, or $-1$. 
\end{abstract}

\maketitle

\section{Introduction}

Let $A$ be an alphabet, and let $A^*$ be the free monoid of words made up of letters in $A$, with the operation of concatenation. Let $R$ be a commutative ring with identity (in our application we may take it to be the polynomial ring $\mathbb{Z}[t]$), and let $\ring$ be the ring of formal sums of elements of $A^*$ with coefficients in $R$, multiplied in the obvious way. Then 
$\ring$ may be viewed as an algebra of formal power series in the noncommutative variables in $A$. We write 1 for the empty word, which is also the identity element of $\ring$, and we write $|w|$ for the length of the word $w$.

We call a word $u$ a \emph{subword} of a word $v$ if there exist words $p$ and $q$ such that $v=p u q$. Let $F$ be a set of nonempty words in $A^*$, and let $A_F$ be the set of words in $A^*$ in which no word in $F$ occurs as a subword. Since $1\in A_F$, the sum $\sum_{w\in A_F}w$ is invertible in $\ring$. Our main result describes the coefficients of the reciprocal of this sum.

\begin{thm}
\label{t-1}
For each word $v\in A^*$, let $M(v)$ be the coefficient of $v$ in 
\begin{equation*}
\biggl(\sum_{w\in A_F}w\biggr)^{-1}.
\end{equation*}
Then for all $v$, $M(v)$ is $0$, $1$, or $-1$.
\end{thm}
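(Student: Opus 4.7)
The plan is to apply the Goulden-Jackson cluster theorem, which expresses
\begin{equation*}
\sum_{w \in A_F} w = \biggl(1 - \sum_{a \in A} a - T\biggr)^{-1},
\end{equation*}
where $T = \sum_{(v, M)} (-1)^{|M|} v$ is a signed sum over \emph{clusters}: pairs $(v, M)$ consisting of a word $v$ and a set $M$ of marked occurrences of words of $F$ in $v$ such that the intervals of the marks cover all positions of $v$ and their overlap graph is connected. Reciprocating yields $M(1) = 1$, $M(a) = -1$ for every letter $a \in A$, and
\begin{equation*}
M(v) = -\sum_{M \text{ cluster on } v}(-1)^{|M|} \qquad \text{for } |v| \geq 2.
\end{equation*}
The claim thus reduces to showing that this alternating cluster count lies in $\{-1, 0, 1\}$ for every such $v$.

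I would tackle this by interpreting the cluster sum as a Möbius function value. Attach to each $v$ a finite lattice $L_v$ whose atoms correspond to the forbidden-word occurrences in $v$ and whose top $\hat{1}$ represents a cluster configuration covering $v$. Rota's crosscut theorem then gives $\mu_{L_v}(\hat{0}, \hat{1}) = \sum_X (-1)^{|X|}$, where $X$ ranges over subsets of atoms with join $\hat{1}$; if $L_v$ is designed so that these subsets are precisely the clusters on $v$, this identifies the alternating cluster count with $\pm \mu_{L_v}(\hat{0}, \hat{1})$. Curtis Greene's theorem, which provides a combinatorial criterion on a finite lattice guaranteeing $\mu(\hat 0,\hat 1) \in \{-1, 0, 1\}$, then finishes the proof.

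The principal obstacle is the construction of $L_v$. The naive choice---the lattice of subsets of $[1, |v|]$ of the form $\bigcup_{p \in M} I(p)$, ordered by inclusion---has the right set of atoms, but its crosscut sum counts arbitrary set-covers of $v$ rather than only connected clusters. For instance, when $v = aabcaa$ with $F = \{aa, bc\}$, the unique set-cover uses three pairwise-disjoint intervals and contributes $-1$ to the crosscut sum, whereas the cluster sum is $0$ since no connected cover exists; thus the naive lattice does not match. A correct $L_v$ must encode the overlap-graph connectivity condition as well, perhaps by passing to a quotient of the Boolean lattice on occurrences that identifies atom-subsets having the same overlap-component support. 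Once $L_v$ is built correctly, verifying Greene's hypothesis should be tractable, since forbidden-word occurrences in $v$ are intervals on a line whose overlap structure is controlled by the linear order on their start positions.
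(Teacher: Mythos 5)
Your setup and reduction are correct: via the cluster theorem, $M(v) = -\sum_{I}(-1)^{|I|}$ where the sum ranges over clusters $(v,I)$, and the claim reduces to showing this lies in $\{-1,0,1\}$. Your instinct to interpret this alternating sum as a M\"obius function is also sound. However, the gap in the argument is the construction of $L_v$: the ``naive'' lattice of unions of intervals \emph{does} work, but only with the right encoding, which you did not find, and the quotient construction you sketch is not it. The trick (used in the paper) is to attach to the occurrence of a forbidden word at positions $i,i+1,\dots,j$ the \emph{half-open} integer interval $[i,j)$. Then two occurrences overlap as subwords precisely when their half-open intervals overlap or abut, and $(v,I)$ is a cluster precisely when $\bigcup_{[i,j)\in I}[i,j) = [1,|v|)$. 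With this encoding your $aabcaa$ example is no longer a problem: the three occurrences become the disjoint singletons $\{1\},\{3\},\{5\}$, whose union is not $[1,6)$, so there are no clusters and $P_{F,v}(-1)=0$ directly, with no lattice needed. In general, either the forbidden-occurrence intervals fail to cover $[1,n)$ (and then the cluster sum is trivially $0$), or they do cover it, in which case (after reducing $F$ so that all occurrences are atoms) the lattice of unions of these intervals has $\hat1=[1,n)$ and Rota's crosscut theorem gives $\mu(\hat0,\hat1)=P_{F,v}(-1)$; Greene's theorem then applies since this is literally a lattice of unions of intervals in $\Z$. You also omit the reduction to $F$ reduced, which is needed so that the occurrences are in fact the atoms.

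Even once repaired, this is a genuinely different route from the paper's. The paper does not invoke Greene's theorem on lattices at all; instead it proves a recurrence $p_k = t(p_{r_k}+\cdots+p_{k-1})$ for the cluster polynomial of successively longer prefixes of a cluster word (Lemma~\ref{l-recur}), and applies an elementary sequence lemma of Greene (Lemma~\ref{l-greene}, which states that any sequence satisfying $u_1=-1$ and $u_k=-(u_{r_k}+\cdots+u_{k-1})$ has nonzero entries alternating $-1,1,-1,\dots$) to conclude $P_{F,w}(-1)\in\{-1,0,1\}$. The paper then runs the implication in the opposite direction from what you propose: in Section~5 it \emph{derives} Greene's lattice theorem from Theorem~\ref{t-main} via the crosscut formula. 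Your approach, properly completed, is essentially Greene's own proof of his theorem transplanted into the cluster setting, and it buys a nice conceptual link to M\"obius functions; the paper's approach is more self-contained, requiring only the sequence lemma and no lattice theory.
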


We derive Theorem \ref{t-1} from a noncommutative form of the Goulden-Jackson cluster theorem \cite{gj}, which gives a formula for counting words according to the number of subwords in $F$. Theorem \ref{t-1} is not an immediate consequence of the cluster theorem, since there is cancellation that must be accounted for. 

A similar result was proved by Dotsenko and Khoroshkin \cite{dk} using a different approach. A proof of Theorem \ref{t-1} was given by Iyudu and Vlassopoulos \cite[Corollary 4.1]{iv}. See also  Dotsenko, Vincent G\'elinas, and  Tamaroff \cite[section 1.1]{dgt}.
 
We also explain how Theorem \ref{t-1} is closely related to a result of Curtis Greene \cite{greene}, that lattices of unions of intervals, ordered by inclusion, have M\"obius functions that are $0$, $1$, or $-1$.

\section{The Goulden-Jackson Cluster Theorem}

Let $F$ be a set of  words in $A^*$ all of length at least 2. We call the elements of $F$ \emph{forbidden words}.
The cluster theorem allows us to count words in  $A^*$ by the number of forbidden subwords in terms of certain ``marked words" called \emph{clusters}.
Informally, a {cluster} is a word which is covered by an overlapping collection of marked forbidden subwords.
For example, if $A=\set{a}$ and $F=\set{aaa}$ then the following are both clusters on the word $a^6$:

\begin{equation}
\label{e-2clusters}
\vcenter{\hbox{\includegraphics[width=1in]{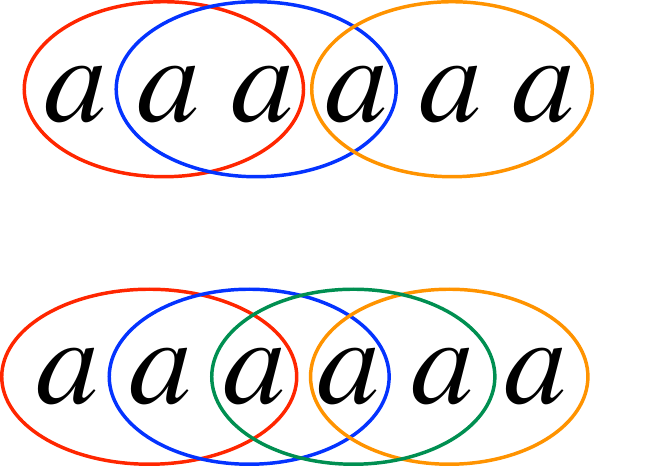}}}
\end{equation}

\noindent but \lower 5.5pt\hbox{\includegraphics[width=.9in]{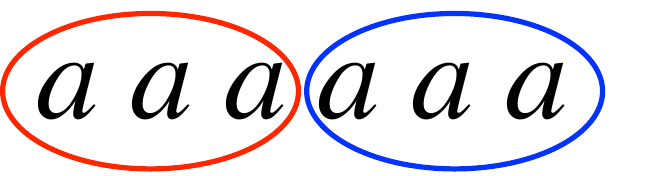}}
is not a cluster since the marked subwords don't overlap.

There are several possible formal definitions for marked words and clusters. The definition that we use is the most convenient for explaining the connection between our main result and Greene's theorem on M\"obius functions of unions of intervals; it also  allows the simple characterization of clusters given in Lemma \ref{l-cluster} below.

We use the notation $[m,n)$ for the interval of integers
$\{\,i\in \Z : m\le i < n\,\}$.
We define a \emph{marked word} (with respect to the alphabet $A$ and the set of forbidden words $F$) to be an ordered pair $(w, I)$ where $w=\alpha_1\alpha_2\cdots \alpha_n$ is a word in $A^*$ and $I$ is a set of intervals $[i,j)$, with $i < j \le |w|$, for which the subword $\alpha_{i}\alpha_{i+1}\cdots \alpha_{j}$ is in $F$. We call $w$ the \emph{underlying word} and $I$ the set of \emph{intervals}  of the marked word $(w,I)$. For consistency, we will say that the subword $\alpha_i\alpha_{i+1}\cdots \alpha_j$ of the word $\alpha_1\alpha_{2}\cdots \alpha_n$ occurs at the interval $[i,j)$. 
Note that for each letter $\alpha\in A$, $(\alpha, \varnothing)$ is a marked word.

The concatenation of two marked words $(u, I)$ and $(v, J)$ is defined by 
\begin{equation*}
(u,I) (v,J) = \bigl(uv, I \cup (J+|u|)\bigl),
\end{equation*}
where 
\begin{equation*}
J+|u| = \{\, [i+|u|, j+|u|) : [i,j) \in J\,\}.
\end{equation*}
Concatenation of marked words is easily seen to be associative and compatible with projection onto the underlying word. 

A marked word is a \emph{cluster} if its underlying word has length at least 2 and it cannot be expressed as a  concatenation of two nonempty marked words. 
We call a word $w$ a \emph{cluster word} if it is the underlying word of a cluster. 
The following characterization of clusters is clear:

\begin{lem}
\label{l-cluster}
A marked word $(u,I)$ is a cluster if and only if $|u|\ge2$ and
\begin{equation*}
\pushQED{\qed} 
\bigcup_{[i,j)\in I}[i,j) = [1,|u|).
\qedhere
\popQED
\end{equation*}
\end{lem}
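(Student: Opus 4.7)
The plan is to prove both directions via the dichotomy that a position $k \in [1, |u|)$ is uncovered by the intervals of $I$ precisely when $(u,I)$ admits a decomposition into nonempty marked words with first part of length $k$. Under this correspondence, indecomposability (together with $|u|\ge2$) is exactly the condition that every $k \in [1,|u|)$ lies in some interval of $I$.

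For the forward direction, assume $(u,I)$ is a cluster, so $|u|\ge2$, and suppose toward a contradiction that some $k \in [1,|u|)$ is not in any $[i,j) \in I$. Then every interval of $I$ satisfies either $j \le k$ or $i > k$, since $k \notin [i,j)$ means $k < i$ or $k \ge j$. Writing $u = u_1 u_2$ with $|u_1|=k$, one sets $I_1 = \{[i,j)\in I : j\le k\}$ and $I_2 = \{[i-k,j-k) : [i,j)\in I,\ i>k\}$ to exhibit $(u,I) = (u_1,I_1)(u_2,I_2)$ as a concatenation of two nonempty marked words, contradicting the cluster hypothesis.

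For the backward direction, assume $|u|\ge2$ and that the intervals of $I$ cover $[1,|u|)$, and suppose $(u,I) = (u_1,I_1)(u_2,I_2)$ with both parts nonempty. Setting $k = |u_1|$ gives $k\in [1,|u|)$, so by hypothesis some $[i,j)\in I$ contains $k$, i.e.\ $i\le k < j$. However, the intervals contributed by $(u_1,I_1)$ all satisfy $j\le k$, and those contributed by $(u_2,I_2)$ (after shifting by $k$) all satisfy $i > k$; thus no such $[i,j)$ can exist in $I$, a contradiction.

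The main obstacle is simply bookkeeping: verifying that in the forward direction $I_1$ and $I_2$ are legitimate interval sets for the marked words $(u_1,I_1)$ and $(u_2,I_2)$ (the subwords recorded by $I_1$ lie entirely in $u_1$, and similarly for $u_2$), and that the concatenation recovers $I = I_1 \cup (I_2+|u_1|)$ exactly. Once this is unpacked from the definition of concatenation of marked words, the lemma is immediate.
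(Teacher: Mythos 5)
Your proof is correct. The paper offers no argument for this lemma---it simply declares the characterization ``clear''---and your proof supplies the natural verification: a position $k\in[1,|u|)$ left uncovered by the intervals of $I$ corresponds exactly to a factorization of $(u,I)$ into nonempty marked words with first factor of length $k$, and your bookkeeping of $I_1$, $I_2$, and the shift by $k$ (including the check that $I = I_1\cup(I_2+k)$ and that both factors are legitimate marked words) is accurate.
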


For example, the two clusters shown in \eqref{e-2clusters} are formally $(a^6, \{[1,3), [2,4), [4,6)\})$ and $(a^6, \{[1,3), [2,4), [3,5),[4,6)\})$. The marked word after \eqref{e-2clusters} is $(a^6, \{[1,3), [4,6)\})$. It is not a cluster since it is the concatenation of $(a^3, \{[1,3)\})$ with itself, or alternatively, since 
$[1,3)\cup[4,6)=\{1,2,4,5\}\ne [1,6)$.

If we identify each letter $\alpha\in A$ with the marked word $(\alpha, \varnothing)$ then every marked word can be expressed uniquely as a concatenation of letters and clusters.

We define the \emph{cluster polynomial} $P_{F,w}(t)$ of a word $w$ by
\begin{equation*}
P_{F,w}(t)=\sum_{I}t^{|I|}
\end{equation*}
where the sum is over all $I$ for which  $(w,I)$ is a cluster.
For example, if $w=aaab$ and $F = \{aa,aab\}$ then the two clusters with underlying word $w$ are 
$[w,I)$ with $I=\{[1,2), [2,4)\}$ and $I=\{[1,2), [2,3), [2,4)\}$, 
so
$P_{F,w}(t) = t^2 + t^3$. 
If $w$ is not a cluster word then $P_{F,w}(t) = 0$.

We define the 
\emph{cluster generating function} for $F$ to be
\begin{equation*}
C_F(t) = \sum_w P_{F,w}(t) w
\end{equation*}
where the sum is over all words $w$ in $A^*$.

For a word $w\in A^*$, let $s_F(w)$ be the number of subwords of $w$ (counted with multiplicities) that are in $F$. For example, if $F=\{aa\}$ then $s_F(a^4) = 3$.

We  now state and prove our form of the Goulden-Jackson cluster theorem. (The original cluster theorem \cite{gj} has more general weights, but is commutative; the proofs are essentially the same.) For some applications of the cluster theorem, see Bassino, Cl\'ement, and Nicod\`eme \cite{bcn}, Noonan and Zeilberger \cite{nz}, Wang \cite{wang}, and Zhuang \cite{zhuang, zhuang-lift}.
\begin{thm}
\label{t-cluster}
Let $F\subseteq A^*$ be a set of words of length at least two.
Then
\begin{equation}
\label{e-cluster1}
\sum_{w\in A^*} t^{s_F(w)}w = 
 \biggl(1-\sum_{\alpha\in A}\alpha -C_F(t-1)\biggr)^{-1}.
\end{equation}
In particular, for $t=0$ we have
\begin{equation}
\label{e-t=0}
\sum_{w\in A_F} w = 
 \biggl(1-\sum_{\alpha\in A}\alpha -C_F(-1)\biggr)^{-1},
\end{equation}
where $A_F$ is the set of words in $A^*$ with no subwords in $F$.
\end{thm}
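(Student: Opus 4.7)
The plan is to evaluate the generating function $G(t) := \sum_{(w,I)} t^{|I|} w$, summed over all marked words $(w,I)$, in two different ways and then substitute $t \mapsto t-1$.

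First, I would invoke the unique factorization of marked words as concatenations of letters and clusters, noted just before the theorem. A letter $(\alpha, \varnothing)$ contributes $\alpha$ to $G(t)$, while a cluster $(u,I)$ contributes $t^{|I|}u$, so the cluster terms sum to $C_F(t)$. The standard geometric series identity then gives
\[
G(t) = \biggl(1 - \sum_{\alpha \in A} \alpha - C_F(t)\biggr)^{-1}.
\]

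Second, I would group the sum defining $G(t)$ by underlying word. For fixed $w$, the allowed interval sets $I$ are arbitrary subsets of the $s_F(w)$-element set of intervals at which a forbidden subword occurs, so $\sum_I t^{|I|} = (1+t)^{s_F(w)}$, and hence
\[
G(t) = \sum_{w \in A^*} (1+t)^{s_F(w)}\, w.
\]
Equating the two expressions for $G(t)$ and substituting $t \mapsto t-1$ produces \eqref{e-cluster1}. The identity \eqref{e-t=0} then follows by setting $t = 0$, since $0^{s_F(w)}$ vanishes unless $s_F(w) = 0$, i.e., unless $w \in A_F$.

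The only substantive point is the unique factorization of marked words into letters and clusters. This can be verified by noting that the indecomposable marked words are exactly the letters and the clusters: a marked word $(w,I)$ with $|w|\ge 2$ fails to split at a position $m$ with $1 \le m \le |w|-1$ precisely when some interval $[i,j) \in I$ straddles $m$ (i.e., $i \le m$ and $j \ge m+1$), and by Lemma \ref{l-cluster} this holds for every such $m$ if and only if $(w,I)$ is a cluster. Granted this decomposition, the rest of the argument is formal manipulation of generating functions, so I do not expect any real obstacle beyond carefully verifying that concatenation of marked words interacts correctly with the weighting by $t^{|I|}$.
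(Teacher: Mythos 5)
Your argument is essentially the same as the paper's: the paper also reduces to the formula $\sum_w (1+t)^{s_F(w)} w = \bigl(1-\sum_\alpha \alpha - C_F(t)\bigr)^{-1}$ and proves it by observing that both sides count marked words weighted by $t^{|I|}$, using the unique factorization of marked words into letters and clusters. You merely perform the two evaluations of $G(t)$ before substituting $t\mapsto t-1$ rather than after, and you spell out the verification of unique factorization (which the paper leaves as an easy observation); both are correct.
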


\begin{proof}
 Replacing $t$ by $t+1$ in \eqref{e-cluster1} gives the equivalent formula
\begin{equation}
\label{e-cluster2}
\sum_{w\in A^*} (1+t)^{s_F(w)}w = 
 \biggl(1-\sum_{\alpha\in A}\alpha -C_F(t)\biggr)^{-1},
\end{equation}
which is easy to prove directly: The coefficient of a word $w$ on the left side of \eqref{e-cluster2} counts marked words with underlying word $w$, where a marked word $(w,I)$ contributes $t^{|I|}$.  But since every marked word is a unique concatenation of clusters and letters, the right side of \eqref{e-cluster2} also counts marked words, with the same weights.
\end{proof}

As an example of Theorem \ref{t-cluster}, 
let $A=\set{a,b,c}$ and  $F=\set{abc, bcc}$. There are three clusters:
\gr{2}{3clusters}
Thus the cluster generating function $C_F(t)$ is $t\kern .5pt abc +t\kern .6pt bcc+t^2 abcc$.
So 
\begin{equation*}\sum_{w\in A^*}t^{s_F(w)}w  
=  \biggl(1-a-b-c-(t-1)abc -(t-1)bcc -(t-1)^2 abcc\biggr)^{-1}.
\end{equation*}

As another example, take $A=\{a\}$ and $F=\set{a^3}$. Although it's not hard to compute the cluster generating function directly, an indirect approach is even easier: For $n\ge 2$, there are $n-2$ occurrences of $a^3$ in $a^n$. So
\begin{equation*}\sum_{w\in A^*}t^{s_F(w)}w  
= 1+a+a^2+ta^3 +t^2a^4+\cdots = 1+a + a^2(1-ta)^{-1}.
\end{equation*}
Replacing $t$ with $1+t$, applying \eqref{e-cluster2},  and simplifying gives
\begin{equation}
\label{e-a^3}
C_F(t) = ta^3\bigl(1-t(a+a^2)\bigr)^{-1}.
\end{equation}
Setting $t=-1$ in \eqref{e-a^3} gives
\begin{align}
\label{e-a^30}
C_F(-1) &= -a^3(1+a+a^2)^{-1}=-a^3(1-a)(1-a^3)^{-1}\notag\\
 &=-a^3+a^4 -a^6+a^7 -\cdots,
\end{align}
a formula that we will derive in another way in section \ref{s-noforb}.
 
\section{A recurrence for the cluster polynomial}

The cluster polynomial $P_{F,w}(t)$ can be computed by a simple recurrence that will be needed in the proof of the main theorem. In Lemma \ref{l-recur} below we require that  $F$ be \emph{reduced}; that is, no word in $F$ is a subword of another word in $F$. This condition makes the recurrence simpler, and the case of reduced $F$ is sufficient for the proof of Theorem \ref{t-1}. We also require that $w$ be a cluster word;  if $w$ is not a cluster word then $P_{F,w}(t)=0$.

\begin{lem}
\label{l-recur}
Suppose that $F\subseteq A^*$ is a reduced set of forbidden words, and 
that $w=\alpha_1\alpha_2\cdots \alpha_n$ is a cluster word with respect to $F$. Then there exists a positive integer $m$, polynomials $p_1,p_2,\dots, p_m$ in $t$, and positive integers $r_2,r_3,\dots, r_m$, with $1\le r_k\le k-1$, such that
$p_1=t$, 
\begin{equation}
\label{e-prec}
p_k= t(p_{r_k}+p_{r_k+1}+\cdots+p_{k-1})\text{ for $2\le k\le m$},
\end{equation}
and $p_m = P_{F, w}(t)$.
\begin{proof}
Let the intervals of  the forbidden subwords of $w$ be $[i_1, j_1),\dots, [i_m,j_m)$, where $i_1<\cdots <i_m$ and thus \textup{(}since $F$ is reduced\textup{)} $j_1<\cdots <j_m$. 
Since $w$ is a cluster word, we  have $i_1=1$ and  $j_m=n$.
Let $w_k$ be the word $\alpha_1\alpha_2\cdots \alpha_{j_k}$ for $1\le k\le m$ and let $p_k = P_{F, w_k}(t)$. Then each $w_k$ is a cluster word and $w_m=w$. Since $w_1\in F$, we have $p_1=t$. Now suppose that $k>1$. Then $j_{k-1}\ge i_k$ since $w$ is a cluster word. So we may define $r_k$ to be the least integer such that  $j_{r_k}\ge i_k$, and we have $1\le r_k \le k-1$. In any cluster on $w_k$, the last interval must be $[i_k, j_k)$ and the next-to-last interval $[i_l, j_l)$ must satisfy $j_l\ge i_k$ and thus  $r_k\le l \le k-1$, and the contribution to $p_k$ from this value of $l$ is $tp_l$.  
Thus
\begin{equation*}
p_k= t(p_{r_k}+p_{r_k+1}+\cdots+p_{k-1}).\qedhere
\end{equation*}
\end{proof}
\end{lem}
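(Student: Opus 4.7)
The plan is to enumerate the forbidden occurrences in $w$ in left-to-right order, define $p_k$ to be the cluster polynomial of the prefix of $w$ ending at the $k$-th occurrence, and obtain the recurrence by analyzing which intervals can play the role of the penultimate interval in a cluster on that prefix.

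The first step is to use that $F$ is reduced to argue that sorting the occurrences in $w$ by starting position also sorts them by ending position: if two distinct occurrences have intervals $[i,j)$ and $[i',j')$ with $i<i'$ and $j\ge j'$, the $F$-subword at $[i',j')$ sits inside the $F$-subword at $[i,j)$ as a proper subword, contradicting reducedness. So I list the occurrences as $[i_1,j_1),\ldots,[i_m,j_m)$ with both $i_k$ and $j_k$ strictly increasing, set $w_k=\alpha_1\cdots\alpha_{j_k}$, and put $p_k=P_{F,w_k}(t)$. Because $w$ is a cluster word, $i_1=1$ and $j_m=|w|$; the first occurrence fills $w_1$ entirely, so its only cluster has a single interval and $p_1=t$.

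The second step is the recurrence. By monotonicity of $j_k$ and reducedness, $[i_k,j_k)$ is the only occurrence reaching the last position of $w_k$, so it must lie in every cluster on $w_k$. If $[i_l,j_l)$ with $l<k$ is the penultimate interval, then contiguity of the union at the boundary of $[i_l,j_l)$ and $[i_k,j_k)$ forces $j_l\ge i_k$, and the intervals other than $[i_k,j_k)$ are then seen to form a cluster on $w_l$. Conversely, any cluster on $w_l$ with $j_l\ge i_k$ extends to a cluster on $w_k$ by appending $[i_k,j_k)$, contributing one extra factor of $t$. Summing over $l$ from the least index $r_k$ satisfying $j_{r_k}\ge i_k$ up to $k-1$ yields the formula \eqref{e-prec}.

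The main structural work will be checking that $w_k$ is itself a cluster word (so $p_k$ is the cluster polynomial of a substantive object), that the forbidden occurrences sitting inside $w_k$ are exactly $[i_1,j_1),\ldots,[i_k,j_k)$, and that $r_k$ is well-defined with $r_k\le k-1$. All three reduce to verifying that $[i_1,j_1),\ldots,[i_{k-1},j_{k-1})$ cover $[1,i_k)$, which follows by restricting the cluster condition on $w$ to this prefix and invoking the monotonicity of the $i_k$ and $j_k$. Once this framework is set up, the recurrence falls out immediately by cases on the penultimate interval.
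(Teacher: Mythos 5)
Your proof is correct and follows essentially the same route as the paper: enumerate the forbidden occurrences in increasing order of starting (and, by reducedness, ending) position, set $w_k$ to be the prefix of $w$ up to the $k$-th occurrence's right endpoint, put $p_k = P_{F,w_k}(t)$, and derive the recurrence by conditioning on the penultimate interval of a cluster on $w_k$. The only difference is one of exposition: you make explicit the bijection between clusters on $w_k$ containing a given penultimate interval $[i_l,j_l)$ and clusters on $w_l$, and you flag the need to check that each $w_k$ is a cluster word, both of which the paper treats as immediate.
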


For example, suppose that $F=\set{a^3}$ and $w = a^6$. Then the intervals of the forbidden words in $w$ are $[1,3)$, $[2,4)$, $[3,5)$, and $[4,6)$, so $m=4$ and we have $r_2=1$, $r_3=1$, and $r_4=2$. Then $p_1=t$ 
and the recurrence gives
\begin{align*}
p_2&=tp_1\\
p_3&=t(p_1+p_2)\\
p_4&=t(p_2+p_3)
\end{align*}
so $p_2=t^2$, $p_3=t^2+t^2$, and $p_4 = F_{F,w}(t)=2t^3+t^4$.

\section{Counting words without forbidden subwords}
\label{s-noforb}
The sum of all words in $A^*$ with no forbidden subwords is given by \eqref{e-t=0}. Thus we can prove Theorem \ref{t-1} by showing that $P_{F,w}(-1)$ is always 0, 1 or $-1$.

Let us first look at an important special case: Suppose that every forbidden word has length~$2$. Then the clusters are of the form $(\alpha_1 \alpha_2\cdots \alpha_n, I)$, where $\alpha_i \alpha_{i+1}\in F$ for $1\le i<n$ and $I=\{\,[i,i+1): 1\le i<n\,\}$, and the cluster polynomial for the word $\alpha_1 \alpha_2 \cdots \alpha_n$, when nonzero, is $t^{n-1}$.
In this case \eqref{e-t=0} may be written in the following symmetrical form:

\begin{cor}
\label{c-csv}
Let $F$ be a set of words of length $2$ in $A^*$. Let $\bar F$ be the complement of $F$ in $A^2$. Let $A_F$ be the set of words in which no word in $F$ occurs as a subword, and similarly for $A_{\bar F}$. \textup(So $A_F$ and $A_{\bar F}$ both contain the empty word and all words of length~$1$.\textup) Then
\begin{equation*}
\sum_{w\in A_F}w=\biggl(\sum_{w\in A_{\bar F}}(-1)^{|w|}w\biggr)^{-1}.
\end{equation*}
\end{cor}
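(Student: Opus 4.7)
The plan is to derive the corollary directly by substituting into equation~\eqref{e-t=0} and identifying $C_F(-1)$ with the length-2-forbidden-word generating function, taking advantage of the already-noted simple shape of clusters in this case.

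First I would invoke \eqref{e-t=0} to write
\begin{equation*}
\sum_{w\in A_F} w = \biggl(1 - \sum_{\alpha\in A}\alpha - C_F(-1)\biggr)^{-1},
\end{equation*}
so it suffices to show that $1 - \sum_{\alpha\in A}\alpha - C_F(-1) = \sum_{w\in A_{\bar F}}(-1)^{|w|}w$.

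Next I would compute $C_F(-1)$ using the structural observation made just before the corollary: when every forbidden word has length $2$, a word $w=\alpha_1\alpha_2\cdots\alpha_n$ of length $n\ge 2$ is a cluster word if and only if $\alpha_i\alpha_{i+1}\in F$ for every $1\le i<n$, in which case there is a unique cluster on $w$ with weight $t^{n-1}$. Setting $t=-1$ gives
\begin{equation*}
C_F(-1) = \sum_{n\ge 2}(-1)^{n-1}\!\!\sum_{\substack{w=\alpha_1\cdots\alpha_n\\ \alpha_i\alpha_{i+1}\in F \text{ for all } i}}\!\! w.
\end{equation*}

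Finally I would describe $A_{\bar F}$: since $\bar F\subseteq A^2$, a word belongs to $A_{\bar F}$ exactly when no two consecutive letters form an element of $\bar F$, equivalently when every consecutive pair lies in $F$ (this is vacuous for words of length $0$ or $1$, so the empty word and every letter lie in $A_{\bar F}$). Thus
\begin{equation*}
\sum_{w\in A_{\bar F}}(-1)^{|w|}w = 1 - \sum_{\alpha\in A}\alpha + \sum_{n\ge 2}(-1)^n\!\!\sum_{\substack{w=\alpha_1\cdots\alpha_n\\ \alpha_i\alpha_{i+1}\in F \text{ for all } i}}\!\! w,
\end{equation*}
which coincides with $1-\sum_\alpha \alpha - C_F(-1)$ term by term, and the corollary follows.

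There is no real obstacle here: the only substantive input is the shape of clusters when forbidden words have length $2$, and this has already been identified in the paragraph introducing the corollary. The proof is essentially bookkeeping of signs to match $(-1)^{n-1}$ in the cluster series against $(-1)^n$ in $\sum_{w\in A_{\bar F}}(-1)^{|w|}w$.
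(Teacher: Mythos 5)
Your proof is correct and is exactly the argument the paper has in mind: the paper presents this corollary without a formal proof, but the preceding paragraph identifies the shape of clusters when all forbidden words have length $2$ (unique cluster on $\alpha_1\cdots\alpha_n$ with weight $t^{n-1}$ whenever each $\alpha_i\alpha_{i+1}\in F$), and the corollary is then read off from \eqref{e-t=0} precisely as you do. You have simply made the implicit bookkeeping explicit, including the observation that $A_{\bar F}$ consists of words whose consecutive pairs all lie in $F$.
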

Corollary \ref{c-csv} was first proved by Fr\"oberg \cite{froberg} (in a somewhat weaker form) and by Carlitz, Scoville, and Vaughan \cite{csv}, and was applied to various problems of permutation enumeration in Gessel \cite{gthesis}. Many related results can be found in Goulden and Jackson's book \cite[Chapter 4]{gjbook}. 

To prove our main result, Theorem \ref{t-main} below (a restatement of Theorem \ref{t-1}), we use Lemma \ref{l-recur} together with a lemma due to Curtis Greene \cite[Lemma 2.2]{greene}:
\begin{lem}
\label{l-greene}
Let $u_1, u_2,\dots, u_m $ be  integers satisfying  $u_1=-1$ and for $1<k\le m$, 
\begin{equation}\label {e-urec}
u_k = -(u_{r_k}+u_{r_k+1}\cdots+u_{k-1}),
\end{equation}
for some  integers $r_k$, where $1\le r_k \le k-1$. Then the nonzero entries of the sequence $u_1, u_2, u_3,\dots,u_m$ are $-1,1, -1, 1, -1, \dots$. 
\end{lem}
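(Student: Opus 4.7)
The plan is to pass from the sequence $(u_k)$ to its partial sums and observe that the recurrence collapses into a very simple statement about those sums. Set $S_0 = 0$ and $S_k = u_1 + u_2 + \cdots + u_k$ for $1 \le k \le m$, so $S_1 = -1$. The recurrence \eqref{e-urec} can be rewritten as
\begin{equation*}
u_k = -(S_{k-1} - S_{r_k - 1}),
\end{equation*}
and adding $S_{k-1}$ to both sides gives the remarkably clean identity $S_k = S_{r_k - 1}$. Since $1 \le r_k \le k-1$, the index $r_k - 1$ lies in $\{0, 1, \ldots, k-2\}$, so each $S_k$ duplicates one of the strictly earlier partial sums.

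From this the main claim falls out by a one-line induction: $S_0 = 0$ and $S_1 = -1$ both lie in $\{0,-1\}$, and if all $S_j$ for $j<k$ lie in $\{0,-1\}$ then $S_k = S_{r_k - 1}$ does too. Consequently $u_k = S_k - S_{k-1} \in \{-1,0,1\}$ for every $k$, which already gives the coefficient bound part of the statement.

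It remains to verify the alternation pattern $-1, 1, -1, 1, \ldots$ among the nonzero entries. If $u_k$ and $u_j$ are consecutive nonzero terms with $k < j$, then $u_i = 0$, hence $S_i = S_{i-1}$, for all $k < i < j$, so $S_{j-1} = S_k$. Because $u_k \ne 0$, the two values $S_{k-1}$ and $S_k$ are distinct elements of $\{0,-1\}$, and because $u_j \ne 0$, the value $S_j$ differs from $S_{j-1} = S_k$, forcing $S_j = S_{k-1}$. Therefore
\begin{equation*}
u_j = S_j - S_{j-1} = S_{k-1} - S_k = -u_k,
\end{equation*}
so successive nonzero entries have opposite signs; combined with $u_1 = -1$ this gives the required pattern.

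The whole argument is essentially a one-trick proof, and the only subtle point is spotting that $S_k = S_{r_k-1}$; once that is in hand, the induction and the alternation bookkeeping are routine, so I do not expect a real obstacle. If anything were to demand care, it would be making sure the indexing of the $r_k$ (which ranges over $\{1, \dots, k-1\}$ rather than $\{0, \dots, k-2\}$) is handled correctly when translating to the partial-sum indices.
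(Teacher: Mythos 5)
Your proof is correct, and it takes a genuinely different route from the paper's. The paper argues by induction directly on the $u_k$'s: it maintains the invariant that the nonzero entries alternate $-1,1,-1,\dots$, and then notes that a contiguous sum of such a sequence telescopes to $0$ or to its last nonzero summand, so $u_k$ continues the pattern. You instead pass to the partial sums $S_k = u_1 + \cdots + u_k$ and observe that the recurrence \eqref{e-urec} collapses to the identity $S_k = S_{r_k-1}$, after which the entire content of the lemma is that all $S_k$ lie in $\{0,-1\}$; the alternation then drops out by comparing $S_{k-1}, S_k, S_{j-1}, S_j$ for consecutive nonzero terms. Your approach buys a simpler inductive invariant (``$S_j \in \{0,-1\}$'' rather than ``the nonzero entries so far alternate''), and the inductive step is trivial because $S_k$ is literally a copy of an earlier partial sum rather than something that must be computed and classified. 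The paper's version, by contrast, stays closer to the statement of the lemma and is the formulation Greene originally used; it avoids introducing the auxiliary sequence. Both are short; yours makes the mechanism that forces the $\{-1,0,1\}$ bound more visible.
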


\begin{proof}  We are given that $u_1=-1$. Now suppose that $k>1$ and that the nonzero entries of $u_1,u_2,\dots, u_{k-1}$ are $-1,1,-1,1,\dots$. Then
$u_{r_k}+u_{r_k+1}\cdots+u_{k-1}$ must be 0, 1, or $-1$, and if the sum is nonzero then it must be equal to the last nonzero summand. Thus the nonzero entries of $u_1,u_2,\dots, u_{k}$ are $-1,1,-1,1,\dots$ and the result follows by induction.
\end{proof}

We can now prove our main result. 

\begin{thm}
\label{t-main}
The sum of all words in $A^*$ with no subwords in $F$ may be written
\begin{equation*}
\biggl(\sum_{w\in A^*} M(w) w\biggr)^{-1}
\end{equation*}
where for every word $w$, $M(w)$ is $0$, $1$, or $-1$.
\end{thm}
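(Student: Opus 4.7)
The plan is to combine Theorem \ref{t-cluster} with the cluster-polynomial recurrence of Lemma \ref{l-recur} and Greene's integer recurrence of Lemma \ref{l-greene}: setting $t=-1$ in the polynomial recurrence converts it verbatim into the hypothesis of Greene's lemma, forcing $P_{F,w}(-1)\in\{-1,0,1\}$ for every cluster word $w$. First I would reduce to the case where $F$ is a reduced set of forbidden words. If some word in $F$ properly contains another word of $F$ as a subword, we may delete it from $F$ without changing $A_F$, and hence without changing $\sum_{w\in A_F} w$; so we may assume $F$ is reduced and Lemma \ref{l-recur} applies.

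By \eqref{e-t=0} we have
\begin{equation*}
\sum_{w\in A_F} w = \biggl(1 - \sum_{\alpha\in A}\alpha - C_F(-1)\biggr)^{-1},
\end{equation*}
so I define $M(w)$ to be the coefficient of $w$ in $1 - \sum_{\alpha\in A}\alpha - C_F(-1)$. Reading off coefficients gives $M(1)=1$, $M(\alpha)=-1$ for each letter $\alpha\in A$, and $M(w) = -P_{F,w}(-1)$ when $|w|\ge 2$ (which is $0$ whenever $w$ is not a cluster word). It therefore suffices to show $P_{F,w}(-1)\in\{-1,0,1\}$ for every cluster word $w$.

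Fix such a $w$ and let $p_1,\dots,p_m$ be the polynomials produced by Lemma \ref{l-recur}, so that $p_1=t$, $p_k = t(p_{r_k}+p_{r_k+1}+\cdots+p_{k-1})$, and $p_m = P_{F,w}(t)$. Setting $u_k := p_k(-1)$ yields integers with $u_1=-1$ and
\begin{equation*}
u_k = -(u_{r_k}+u_{r_k+1}+\cdots+u_{k-1}),
\end{equation*}
which is precisely the hypothesis of Lemma \ref{l-greene}. That lemma then gives $u_m = P_{F,w}(-1)\in\{-1,0,1\}$, completing the argument.

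There is no real obstacle here beyond assembly: the substantive work was done in the preceding sections. The only subtlety is the preliminary reduction to reduced $F$, which is needed for Lemma \ref{l-recur} to be available; once one has that lemma and Greene's lemma in hand, the specialization $t=-1$ makes the two recurrences match on the nose, and the $\{-1,0,1\}$ conclusion for $M(w)$ is essentially forced. This is also what transparently explains the paper's remark that Theorem \ref{t-1} is closely related to Greene's theorem on M\"obius functions of unions of intervals.
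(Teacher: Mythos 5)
Your proposal is correct and follows essentially the same route as the paper: reduce to reduced $F$, use \eqref{e-t=0} to identify $M(w)=-P_{F,w}(-1)$ (with the small-length cases handled directly), then set $t=-1$ in Lemma~\ref{l-recur} and invoke Lemma~\ref{l-greene}. The only difference from the paper's version is that you spell out the coefficient bookkeeping a bit more explicitly; the substance is identical.
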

 
\begin{proof}
By equation \eqref{e-t=0} in Theorem \ref{t-cluster}, $M(w) = -P_{F,w}(-1)$, so it is sufficient to show that for every word $w$, $P_{F,w}(-1)$ is 0, 1, or $-1$. We will derive this from Lemma \ref{l-recur}.

We may assume without loss of generality that $F$ is reduced. To see this, note that if a word $u$ in $F$ is a subword of another word $v$ in $F$, then forbidding $u$ as a subword automatically forbids $v$, so we may remove $v$ from $F$ without changing $A_F$. Changing $F$ in this way may change $C_F(t)$ but it will not change $C_F(-1)$.

Then setting  $t=-1$  in Lemma \ref{l-recur} and applying Lemma \ref{l-greene} yields the theorem.
\end{proof}

Dotsenko and Khoroshkin \cite[Corollary 22]{dk} proved a result closely related to Theorem \ref{t-main}. Their interest was in finding exponential generating functions for  permutations avoiding consecutive patterns, so they did not consider words with repeated entries. However, their approach, based on earlier work of Anick \cite{anick}, can be used to prove Theorem \ref{t-main}, and gives an explicit, though recursive, description of the values of $M(w)$ in Theorem \ref{t-main}. A proof of Theorem \ref{t-main}, using algebraic techniques, was given by 
Iyudu and Vlassopoulos \cite[Corollary 4.1]{iv}. See also  Dotsenko, Vincent G\'elinas, and  Tamaroff \cite[section 1.1]{dgt}, which discusses ``Anick chains" and their connection with Tor groups of monomial algebras.

We can obtain a similar (though not obviously equivalent) description of $M(w)$ through a refinement of Lemma \ref{l-greene} that takes into account that $F$ is reduced:

\begin{lem}
\label{l-greene2}
With the assumptions of Lemma \textup{\ref{l-greene}}, suppose that in addition we have
$r_{k-1}\le r_k$ for $2< k \le m$.
Then in the sum on the right side of \eqref{e-urec} at most two terms are nonzero. Thus $u_k$ is nonzero if and only if exactly one of $u_{r_k},\dots, u_{k-1}$ is nonzero.
\end{lem}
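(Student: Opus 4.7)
The plan is to prove the lemma by induction on $k$, establishing the stronger claim that for every $k$ with $2 \le k \le m$ the interval $[r_k, k-1]$ contains at most two indices $j$ with $u_j \ne 0$. Once that is in hand, the second assertion about $u_k$ falls out immediately from Lemma \ref{l-greene}: with zero nonzero terms in the range, the sum in \eqref{e-urec} is $0$, so $u_k = 0$; with exactly one, the sum is $\pm 1$, so $u_k \ne 0$; and with two, the alternating signs guaranteed by Lemma \ref{l-greene} force the two nonzero entries to cancel, so $u_k = 0$ again.

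For the inductive step at an index $k \ge 3$, I would split on the value of $u_{k-1}$. If $u_{k-1} = 0$, then the nonzero entries of $[r_k, k-1]$ are exactly those of $[r_k, k-2]$, and the monotonicity hypothesis $r_{k-1} \le r_k$ gives $[r_k, k-2] \subseteq [r_{k-1}, k-2]$, so the inductive hypothesis applied at $k-1$ yields at most two nonzero entries. If instead $u_{k-1} \ne 0$, then by the recurrence the sum $u_{r_{k-1}} + \cdots + u_{k-2}$ equals $-u_{k-1} = \pm 1$; the inductive hypothesis says $[r_{k-1}, k-2]$ has at most two nonzero entries, but two alternating entries would sum to $0$, so it must contain \emph{exactly} one nonzero entry. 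The subinterval $[r_k, k-2]$ therefore contributes at most one nonzero entry, and adjoining the single nonzero $u_{k-1}$ gives at most two nonzero entries in $[r_k, k-1]$, as required. The base case $k = 2$ is immediate since $r_2 = 1$ forces $[r_2, 1] = \{1\}$ to contain only the entry $u_1$.

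The main obstacle will be the case $u_{k-1} \ne 0$: to rule out the threatening configuration of three nonzero entries straddling index $k-1$, one has to use the monotonicity $r_{k-1} \le r_k$ in tandem with the sharpening of the inductive hypothesis furnished by the alternating-sign conclusion of Lemma \ref{l-greene}, which is exactly what collapses ``at most two'' nonzero entries in $[r_{k-1}, k-2]$ down to ``exactly one.'' Without the monotonicity the inclusion $[r_k, k-2] \subseteq [r_{k-1}, k-2]$ would fail, and without Lemma \ref{l-greene} one could not prevent $[r_{k-1}, k-2]$ from absorbing two nonzero entries that survive into $[r_k, k-2]$.
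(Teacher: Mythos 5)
Your proof is correct and takes essentially the same approach as the paper: both proceed by induction on $k$, using the monotonicity hypothesis to obtain the inclusion $[r_k,k-2]\subseteq[r_{k-1},k-2]$ and the alternating-sign conclusion of Lemma~\ref{l-greene} to rule out a third nonzero term. The only difference is cosmetic: you split cases on whether $u_{k-1}$ vanishes, while the paper splits on whether exactly two of $u_{r_k},\dots,u_{k-2}$ are nonzero (concluding $u_{k-1}=0$ in that case); the two case analyses are logically equivalent.
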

\begin{proof}
We proceed by induction on $k$. The assertion is clearly true for $k=2$, since there is only one term in the sum. Now suppose that $k>2$ and that among
$u_{r_{k-1}},\dots ,u_{k-2}$, at most two are nonzero. 
Since $r_k\ge r_{k-1}$, at most two of $u_{r_k},\dots,u_{k-2}$ are nonzero. If fewer than two are nonzero, then at most two of $u_{r_k},\dots,u_{k-1}$ are nonzero, and if exactly two of  $u_{r_k},\dots,u_{k-2}$ are nonzero, then by \eqref{e-urec} (with $k-1$ for $k$), $u_{k-1}=0$, and the conclusion follows.
\end{proof}

Let us call a word $w$ in $A^*$  of length greater than 1 \emph{salient} (with respect to $F$) if $M(w)\ne 0$. Applying Lemma \ref{l-greene2} gives an explicit, though  recursive, characterization of salient words. First we note that by Theorem \ref{t-cluster}, every salient word must be a cluster word.

\begin{thm}
\label{e-sal}
Let $F$ be a reduced set of forbidden words, and let $w=\alpha_1\alpha_2\cdots \alpha_n$ be a cluster word.
If $w\in F$ then $w$ is salient with $M(w)=1$. Otherwise, suppose that the last forbidden subword in $w$ has interval $[j,n)$. Then $w$ is salient if and only if there is exactly one salient initial subword $w'=\alpha_1\alpha_2\cdots \alpha_m$ of $w$ with $j\le m<n$, and in this case $M(w) =-M(w')$.
\end{thm}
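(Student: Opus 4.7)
The plan is to chase the recursion of Lemma \ref{l-recur} through the substitution $t=-1$ and then apply the refined Greene lemma (Lemma \ref{l-greene2}), translating back to the language of initial subwords at the end. First I would dispatch the easy case $w\in F$: since $F$ is reduced, no proper subword of $w$ lies in $F$, so in the notation of Lemma \ref{l-recur} we necessarily have $m=1$ and $p_1=t$, whence $M(w)=-p_1(-1)=1$, matching the claim.

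For the main case $w\notin F$, I use Lemma \ref{l-recur} to produce intervals $[i_1,j_1),\dots,[i_m,j_m)$ with $i_1<\cdots<i_m$, initial cluster words $w_k=\alpha_1\cdots\alpha_{j_k}$, and polynomials satisfying $p_1=t$, $p_k=t(p_{r_k}+\cdots+p_{k-1})$ with $p_m=P_{F,w}(t)$. Setting $u_k:=p_k(-1)$ gives $u_1=-1$ and $u_k=-(u_{r_k}+\cdots+u_{k-1})$, the exact form required by Lemma \ref{l-greene}. Before promoting this to Lemma \ref{l-greene2} I need to check the monotonicity hypothesis $r_{k-1}\le r_k$: because $i_{k-1}<i_k$, the condition ``$j_r\ge i_k$'' is strictly stronger than ``$j_r\ge i_{k-1}$'', and so the least $r$ satisfying it is non-decreasing in $k$. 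This verification is the one concrete step I expect to be easiest to overlook.

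The next task is to translate the abstract sequence $(u_k)$ back into statements about initial subwords of $w$. The key observation is that an initial subword $\alpha_1\cdots\alpha_\ell$ is a cluster word exactly when $\ell=j_k$ for some $k$: the union condition of Lemma \ref{l-cluster} forces the right endpoint $\ell$ to coincide with the right endpoint of some forbidden interval. So the cluster-word initial subwords of $w$ are precisely $w_1,\dots,w_m$, and any other initial subword $w'$ already satisfies $P_{F,w'}\equiv 0$ and therefore $M(w')=0$, contributing nothing to the salient count. Writing $j=i_m$ as in the statement, the definition of $r_m$ as the least index with $j_{r_m}\ge i_m$ identifies the cluster-word initial subwords $w'$ with $j\le |w'|<n$ as exactly $w_{r_m},w_{r_m+1},\dots,w_{m-1}$.

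Finally, applying Lemma \ref{l-greene2} to $u_m$: among $u_{r_m},\dots,u_{m-1}$ at most one term is nonzero, and $u_m\ne 0$ iff exactly one of them is nonzero, in which case $u_m=-u_l$ for that unique $l$. Converting via $M(w_k)=-u_k$, this is precisely the assertion that $w$ is salient iff exactly one of $w_{r_m},\dots,w_{m-1}$ is salient; and then $M(w)=-u_m=u_l=-M(w_l)=-M(w')$, giving the sign claimed in the theorem. The main obstacle is really only the bookkeeping that matches ``initial subwords of $w$ in the range $j\le\ell<n$'' with the indexed family $w_{r_m},\dots,w_{m-1}$ of Lemma \ref{l-recur}; once this dictionary is in place, the theorem is a direct translation of Lemma \ref{l-greene2}.
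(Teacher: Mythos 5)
Your proof is correct and follows the same route as the paper: set $t=-1$ in the recurrence of Lemma \ref{l-recur} and apply Lemma \ref{l-greene2}, after verifying the monotonicity $r_{k-1}\le r_k$ and identifying the cluster-word initial subwords $w_{r_m},\dots,w_{m-1}$ with the candidates $j\le m<n$. Your write-up actually fills in several details the paper leaves implicit (the $w\in F$ base case, the verification that $r_k$ is non-decreasing, and the dictionary between the $u_k$ and the initial cluster words), which is useful; the only slip is the phrase ``among $u_{r_m},\dots,u_{m-1}$ at most one term is nonzero'' --- Lemma \ref{l-greene2} guarantees at most \emph{two} nonzero terms, and it is only when $u_m\ne0$ that exactly one is nonzero --- but since you immediately state the correct ``iff'' and deduce the sign correctly, the argument is unaffected.
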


\begin{proof}
Since $F$ is reduced, the integers $r_k$ of Lemma \ref{l-recur} satisfy $r_{k-1}\le r_k$ for $2<k\le m$.
The theorem then follows by setting $t=-1$ in \eqref{e-prec} and applying Lemma \ref{l-greene2}.
\end{proof}

As an example of Theorem \ref{e-sal}, take $A=\{a\}$ and $F=\{a^3\}$. With the notation of Theorem \ref{e-sal}, let us call the  initial subwords $\alpha_1\alpha_2\cdots \alpha_m$ of $w$ with $j\le m<n$ the \emph{candidates} for $w$.  In this example, the cluster words are of the form $a^n$ with $n\ge3$, and the candidates for $a^n$, with $n\ge3$, are $a^{n-1}$ and $a^{n-2}$. 

The word $a^3$ is salient, since it is in $F$, and  $M(a^3) =1$. 
The candidates for $a^4$ are $a^3$ and $a^2$. Only $a^3$ is salient, so $a^4$ is salient with $M(a^4)=-1$. The candidates for $a^5$ are $a^4$ and $a^3$. Since both  are salient, $a^5$ is not. The candidates for $a^6$ are $a^5$ and $a^4$. Of these only $a^4$ is salient, so $a^6$ is salient with $M(a^6) =1$. In general, we can easily show by induction that for $n>3$, if $n\equiv 0\pmod 3$ then only candidate $a^{n-2}$ is salient, so $a^n$ is salient. If $n\equiv 1\pmod 3$ then only candidate $a^{n-1}$ is salient so $a^n$ is salient. However, if $n\equiv 2\pmod3$ then $a^{n-1}$ and $a^{n-2}$ are both salient, so $a^n$ is not salient. A similar analysis holds for $F=\{a^k\}$ for any $k\ge2$.

\section{Greene's theorem on M\"obius functions of lattices}

Theorem \ref{t-main} is closely related to a result of Curtis Greene \cite{greene},  which we will derive from it. We note that Greene's proof of  this result used Lemma \ref{l-greene}.

\begin{nthm}
Let $I_1, I_2,\dots, I_m$ be nonempty intervals in $\Z$. Let $\L$ be the lattice of unions of the $I_j$ \textup(including the empty set\textup) ordered by inclusion. Then for all $X\subseteq Y$ in $\L$ we have $\mu(X,Y)\in \{-1,0,1\}$, where $\mu$ is the M\"obius function of $\L$.
\end{nthm}

To prove Greene's theorem, we first recall a well-known formula for the M\"obius function of a lattice, a special case of Rota's cross-cut theorem \cite{rota}. We include a short proof for completeness.

Recall that an \emph{atom} of a lattice is an element that covers the minimal element~$\hat0$.

\begin{lem} 
\label{l-atoms}
Let $L$ be a finite lattice, and let $\A$ be the set of atoms of $L$. Then for any $x\in L$, the M\"obius function $\mu(\hat0, x)$ is given by 
\begin{equation}
\label{e-mob}
\mu_L (\hat0, x)  = \sum_\B (-1)^{|\B|},
\end{equation}
where the sum is over all subsets $\B\subseteq \A$ with join $x$.
\end{lem}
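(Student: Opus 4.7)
The plan is to verify that the right-hand side of \eqref{e-mob} satisfies the same defining recurrence as the M\"obius function and then invoke uniqueness. Write $\A_x=\{\,a\in\A:a\le x\,\}$ for the set of atoms below $x$, and define
\begin{equation*}
f(x)=\sum_{\substack{\B\subseteq \A\\ \vee\B=x}}(-1)^{|\B|},
\end{equation*}
with the convention that the empty join is $\hat 0$, so that the empty set contributes $1$ to $f(\hat 0)$. The goal is to prove $f(x)=\mu_L(\hat 0,x)$ for every $x\in L$.

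First I would compute $\sum_{y\le x}f(y)$ by swapping the order of summation. Since $\vee\B\le x$ is equivalent to $\B\subseteq \A_x$, one obtains
\begin{equation*}
\sum_{y\le x}f(y)=\sum_{\B\subseteq \A_x}(-1)^{|\B|}.
\end{equation*}
By the binomial theorem this sum equals $0$ when $\A_x\ne\varnothing$ and equals $1$ when $\A_x=\varnothing$. Next I would observe that in a finite lattice, $\A_x=\varnothing$ happens exactly when $x=\hat 0$: if $x\ne\hat 0$, then any minimal element of the interval $(\hat 0,x]$ is an atom and lies below $x$. Hence
\begin{equation*}
\sum_{y\le x}f(y)=[x=\hat 0].
\end{equation*}

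This is precisely the defining recurrence of $\mu_L(\hat 0,\cdot)$, so by M\"obius inversion (or by induction on the rank of $x$) we conclude $f(x)=\mu_L(\hat 0,x)$, which is the claim. The only delicate point is the justification that every non-minimum element of a finite lattice sits above an atom; everything else is a routine interchange of summations together with the vanishing of the alternating sum of binomial coefficients on a nonempty finite set.
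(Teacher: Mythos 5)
Your proof is correct and follows essentially the same route as the paper: define $f(x)$ as the alternating sum, interchange summation to show $\sum_{y\le x}f(y)$ is $1$ for $x=\hat0$ and $0$ otherwise, and conclude by the uniqueness of the M\"obius function. The only difference is that you spell out the (true and mildly delicate) fact that every $x>\hat0$ in a finite lattice has an atom below it, which the paper leaves implicit.
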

\begin{proof}
Let $f(x)$ be the sum on the right side of \eqref{e-mob}.
Then $\sum_{y\le x}f(y)=\sum_{\B\subseteq \A_x} (-1)^{|\B|}$, where $\A_x$ is the set of  atoms of $L$ less than or equal to $x$. Thus 
$\sum_{y\le \hat0} f(y) =f(\hat0)= 1$
and
if $x>\hat0$ then $\sum_{y\le x} f(y) = 0$.
Thus $f(x)$ satisfies the same recurrence that defines $\mu(\hat0,x)$, so $f(x) = \mu(\hat0,x)$.
\end{proof}

\begin{proof}[Proof of Greene's Theorem]
We prove the case in which $X=\hat0 = \varnothing$ and $Y=\hat 1 = I_1\cup I_2 \cup\cdots \cup I_m$; the general case follows easily from this case.

Let $\I= \{I_1, I_2, \dots, I_m\}$ be a set of nonempty intervals in $\Z$. Without loss of generality we may assume that $I_1\cup I_2\cup\cdots \cup I_m = [1,n)$, where $n\ge2$. Let $\L$ be the lattice of unions of the intervals in $\I$. Let $\I'$ be the set of intervals in $\I$ that do not contain smaller intervals in $\I$, so $\I'$ is the set of atoms of $\L$. By Lemma \ref{l-atoms}, if $\cup_{I\in \I'}I\ne [1,n)$ then $\mu(\hat0, \hat1) =0$. So we may assume that $\cup_{I\in \I'}I= [1,n)$. Again by Lemma \ref{l-atoms}, $\mu(\hat0, \hat1)$ depends only on $\I'$, so we may now assume that $\I'=\I$. 
Then there is an alphabet $A$, a reduced set of forbidden words $F\subseteq A^*$, and a word $w$ such that $I_1, I_2,\dots, I_m$ are the intervals of the forbidden subwords of $w$. (For example, we may take $w$ to be a word of length $n$ with distinct letters and take $F$ to be the set of subwords corresponding to the intervals $I_1,\dots, I_m$.)

By Lemma \ref{l-atoms}, $\mu(\hat0, \hat1) =\sum_I (-1)^I$, where the sum is over all subsets $I$ of $\I$ 
with union $[1,n)$. Thus by the definition of the cluster polynomial and Lemma \ref{l-cluster}, we have
$\mu(\hat0, \hat1) = P_{F,w}(-1)$, which by Theorem \ref{t-main} is 0, 1, or $-1$.
\end{proof}

\longthanks{I would like to thank an anonymous referee for bring Dotsenko and Khoroshkin's work \cite{dk} to my attention and I would like to thank Vladimir Dotsenko for telling me about \cite{dgt} and \cite{iv}.}

\bibliographystyle{amsplain-ac}
\bibliography{gjrefs}
\end{document}